\newcommand*\circled[1]{\tikz[baseline=(char.base)]{
   \node[shape=circle,draw,inner sep=1pt] (char) {#1};}}
\theoremstyle{plain}
\newtheorem{corollary}{Corollary}
\newtheorem{proposition}{Proposition}
\newtheorem{remark}{Remark}
\numberwithin{equation}{section}
\begin{document}

\title[Maximum of a semigroup's complementary]{The maximum of the complementary of a semigroup with restricted conditions} 
\author{A. Linero Bas and D. Nieves Rold\'an}

\begin{abstract}

We consider the set $$\mathcal{A} = \left\{10\cdot a + 11\cdot b \ | \gcd(a,b)=1, a\geq 1, b\geq 2a+1 \right\}.$$ We will prove that $\mathcal{A}$ is unbounded and that there exists a natural number $M\notin \mathcal{A}$  for which 
$$\left\{M+m:m\geq 1,m\in\mathbb N\right\}\subset \mathcal{A}.$$ Indeed, such number is $M = 1674$.
\end{abstract}
\maketitle
Keywords: Semigroup; boundedness; restricted conditions; coin problem; max-type difference equations. \newline
Mathematics Subject Classification: 39A23; 11A99; 20M99.
 
\section{Introduction}

In this paper, we focus on the set $$\mathcal{A} = \left\{10\cdot a + 11\cdot b \ | \gcd(a,b)=1, a\geq 1, b\geq 2a+1 \right\}.$$ In concrete, we will prove that it is unbounded and has bounded gaps. In fact, it is shown that 1674 is the biggest natural number not included in it.

This set appears when analysing the periodic character of the solutions of the max-type difference equation 
\begin{equation}\label{Eq:G4}
	x_{n+4}=\max\{x_{n+3},x_{n+2}, x_{n+1},0\}-x_n.
\end{equation}

In fact, Linero and Nieves, \cite{Linero}, show that the set of periods, $\mathrm{Per}(F_4)$, of Equation (\ref{Eq:G4}) is $\mathrm{Per}(F_4) = \mathcal{A}\cup \{1,8,11\}$. Therefore, a natural question that arises is whether there exists or not a number $M$ such that every integer number greater than $M$ belongs to $\mathcal{A}$. 

Beforehand, it is interesting to mention that if we had not had the restriction $b \geq 2a+1$, the answer would follow directly by the Diophantic Frobenius Problem. In concrete, the problem -also referred as Coin Problem in the literature- consists in finding the greatest number that can not be expressed as a linear combination with positive coefficients of a set $(a_1,\ldots,a_n)$ of natural numbers with $\gcd(a_1,\ldots,a_n) = 1$. In the particular case of two natural numbers, that greatest number is given by $a_1a_2 - a_1 - a_2$ (for more information, the reader is referred to \cite{Alfonsin}). In our case, $a_1=10$ and $a_2 = 11$, the greatest number that cannot be expressed as a linear combination of $10$ and $11$ is $89$. But as we have highlighted, the conditions $b \geq 2a+1$ and $\gcd(a,b)=1$ complicates the problem.

The main section of this work, namely, Section \ref{Unbounded}, starts analysing the prime numbers in $\mathcal{A}$ and the multiples of 11 that belong to the set. Then, the technique developed to prove the unboundedness of $\mathcal{A}$ and to describe in detail which numbers are in the set is based on the division of the set of natural numbers, not multiple of 11, in ten different classes, $$C_m := \{10m + 11k, k \geq 0\},$$ \noindent where we fix the value $m\in \{1,2,\ldots,10\}$. Finally, we collect in Table \ref{table} all the numbers in $\mathcal{A}$.

\section{Unboundedness of $\mathcal{A}$} \label{Unbounded}

Let us denote the set of prime numbers by $\mathcal{P}$. In the following list, we present the first elements $p$ of $\mathcal{P}$, until $401$, and we encircle those admitting a decomposition $p=10\cdot a+11\cdot b$ with $\gcd(a,b)=1$ and $b\geq 2a+1$:

\noindent $2,3,5,7$, \circled{$11$}, $13,17,19,23,29,31,37,41,$ \circled{$43$}, $47, 53, 59, 61, 67, 71, 73, 79, 83,$ $89,$ \circled{$97$}, 
$101, 103,$ \circled{$107$}, \circled{$109$}, $113, 127,$ \circled{$131$}, $137,$ \circled{$139$}, $149,$ \circled{$151$}, $157,$ \circled{$163$}, $167$, \circled{$173$}, $179, 181, 191$, \circled{$193$}, \circled{$197$}, 
$199, 211, 223,$ \circled{$227$}, \circled{$229$}, $233,$ \circled{$239$}, \circled{$241$}, \circled{$251$}, \circled{$257$}, \circled{$263$}, 
\circled{$269$}, \circled{$271$}, $277,$ \circled{$281$}, \circled{$283$}, \circled{$293$}, \circled{$307$}, \circled{$311$}, \circled{$313$}, \circled{$317$}, \circled{$331$}, \circled{$337$}, \circled{$347$}, 
\circled{$349$}, \circled{$353$}, \circled{$359$}, \circled{$367$}, \circled{$373$}, \circled{$379$}, \circled{$383$}, 
\circled{$389$}, \circled{$397$}, \circled{$401$}. 

For instance, $397\in \mathcal{A}$ since $397=297+100=11\cdot 27 + 10\cdot 10$, whereas $277\notin \mathcal{A}$ as the decompositions $277=187+90=11\cdot 17 + 10\cdot 9$ and $277=77+200=11\cdot 7 + 10\cdot 20$ are not allowed, since $b< 2a+1$ in both cases. We observe that we have encircled all the primes greater than $277$. In fact, this observation can be confirmed by the following result. 
\begin{proposition}\label{P:losprimos}
If $p\in\mathcal{P}$, with $p\geq 281$,  then $p\in \mathcal{A}.$
\end{proposition}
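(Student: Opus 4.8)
The plan is to reduce the statement to a congruence-plus-interval condition and then dispose of a small finite range by hand. Given a prime $p \geq 281$, I want to produce integers $a \geq 1$ and $b \geq 2a+1$ with $\gcd(a,b)=1$ and $p = 10a + 11b$. The first observation is that the coprimality constraint comes for free: any common divisor $d$ of $a$ and $b$ divides $10a + 11b = p$, so $d \in \{1,p\}$; since every pair I will construct satisfies $1 \leq a < p$, we have $d \leq a < p$, forcing $d = 1$. Thus it suffices to solve $p = 10a + 11b$ with $a \geq 1$ and $b \geq 2a+1$.

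Next I would parametrize by $a$. Writing $b = (p - 10a)/11$, integrality of $b$ is equivalent to $10a \equiv p \pmod{11}$, i.e. $a \equiv -p \pmod{11}$ (using $10 \equiv -1$). The inequality $b \geq 2a+1$ rearranges to $p \geq 32a + 11$, that is, $a \leq (p-11)/32$. Hence the whole statement reduces to the existence of an integer $a$ with $a \equiv -p \pmod{11}$ and $1 \leq a \leq (p-11)/32$, after which one sets $b = (p-10a)/11$.

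For the main range I would take $a = a_{\min}$, the least positive residue of $-p$ modulo $11$. Because $p$ is prime and $p \neq 11$, we have $p \not\equiv 0 \pmod{11}$, so $a_{\min} \in \{1,\dots,10\}$. When $p \geq 331$ one has $(p-11)/32 \geq 10 \geq a_{\min}$, so $a_{\min}$ lies in the admissible interval and the decomposition exists, settling all $p \geq 331$ uniformly.

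The only genuinely case-based part — and the main obstacle — is the finite gap $281 \leq p < 331$, where the uniform bound $(p-11)/32 \geq 10$ fails. Here I would verify the seven primes $281, 283, 293, 307, 311, 313, 317$ directly, checking in each case that the least residue $a_{\min} \equiv -p \pmod{11}$ still satisfies $a_{\min} \leq (p-11)/32$ and exhibiting the resulting pair $(a,b)$. For instance $p = 281$ gives $a_{\min} = 5$ and $b = 21$, so $281 = 10\cdot 5 + 11\cdot 21$ with $21 \geq 11$; the other six are analogous. I expect this bookkeeping, rather than any conceptual difficulty, to be the main effort, since the congruence argument dispatches every $p \geq 331$ in one stroke.
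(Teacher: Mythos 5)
Your proposal is correct and follows essentially the same route as the paper: both choose $a$ to be the least positive residue of $-p$ modulo $11$ (the paper's $m$), get $\gcd(a,b)=1$ for free from the primality of $p$ since any common divisor divides $p$ and is at most $a<p$, and reduce $b\geq 2a+1$ to $p\geq 32a+11$. The only difference is bookkeeping: by noting $a\leq 10$ you get the uniform threshold $p\geq 331$ and check seven primes by hand, whereas the paper uses the cruder bound $m\leq 11$ (hence $p\geq 363$, applied for $p\geq 401$) and disposes of the primes up to $397$ via its introductory table.
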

\begin{proof}
According to the above table, we can assume that $p\geq 401$. Notice that $p$ can be written in the form $p=(p-10m)+10m$, where $m$ is the smallest positive integer (in fact, unique, with $m\in\{1,2,\ldots,11\}$) holding that $p-10m$ is a multiple of $11$. Then $p=11\left(\frac{p-10m}{11}\right)+10m$. Let $d=\gcd\left(\frac{p-10m}{11},m\right).$ Then, on one hand, $d|p$ due to the fact that $d$ divides $\frac{p-10m}{11}$ and $m$; on the other hand, since $d\leq m$, and $p$ is prime, $p>12$, we deduce that $d=1$. 
Next, in order to definitively prove that $p\in \mathcal{A}$, we need to show that $\frac{p-10m}{11}\geq 2m+1.$ This is equivalent to see that $p\geq 32m+11$, which obviously occurs because $m\leq 11$ and $p\geq 363$.
\end{proof}

We collect some information about the formation of elements of $\mathcal{A}$ once we know that a prime number $p$ belongs to the set. 
\begin{proposition}\label{P:formacionP}
Let $p\in\mathcal{P}\cap \mathcal{A}$, with $p\geq 43.$ Assume that $p=10a+11b$, with $\gcd(a,b)=1$, $b\geq 2a+1$, $a\geq 1$.  Then:
\begin{itemize}
\item[(a)] $pq\in \mathcal{A}$ for all $q\geq 1$ with $\gcd(p,q)=1$ and $aq\geq 12, (b-2a)q\geq 33$. In particular,  $pq\in \mathcal{A}$ for all $q\geq 33$ with $\gcd(p,q)=1$. 
\item[(b)] $p^kq\in \mathcal{A}$ for all $k\geq 2$ and for all $q\geq 1$ with $\gcd(p,q)=1$. 
\end{itemize}
\end{proposition}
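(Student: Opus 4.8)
The plan is to produce, in each case, an explicit admissible decomposition by perturbing the ``obvious'' one. The starting point is the identity
\[
pq = 10\,(aq + 11t) + 11\,(bq - 10t),
\]
valid for every integer $t$ since $10\cdot 11 = 11\cdot 10$. Writing $A = aq + 11t$ and $B = bq - 10t$, a direct computation gives $bA - aB = (10a + 11b)\,t = pt$, so every common divisor of $A$ and $B$ divides both $pq$ and $pt$; in particular, for $t = \pm 1$ one gets $\gcd(A,B)\mid p$, whence $\gcd(A,B)\in\{1,p\}$, and the value $p$ occurs precisely when $p\mid A$. Thus the whole difficulty is to pick $t$ so that $p\nmid A$ while keeping $A\geq 1$ and $B\geq 2A+1$.

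For part (a) I would try $t=1$ first: then $A = aq+11\geq 1$ automatically, and the condition $B\geq 2A+1$ unwinds exactly to $(b-2a)q\geq 33$, which is assumed. If in addition $p\nmid aq+11$, then $\gcd(A,B)=1$ and we are done. Otherwise $p\mid aq+11$, and I switch to $t=-1$, i.e. $A = aq-11$, $B = bq+10$; now $A\geq 1$ is equivalent to $aq\geq 12$ (the second hypothesis), while $B\geq 2A+1$ reduces to $(b-2a)q\geq -31$, which is trivial. What makes this dichotomy work is that the two candidates satisfy $A(1)-A(-1)=22$ and $p\geq 43$, so $p$ cannot divide both $aq+11$ and $aq-11$; hence at least one of $t=\pm 1$ yields $\gcd(A,B)=1$. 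The ``in particular'' clause is then immediate, since $q\geq 33$ forces $aq\geq q\geq 33\geq 12$ and $(b-2a)q\geq q\geq 33$, using $a\geq 1$ and $b-2a\geq 1$.

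For part (b) the same identity is applied with $q$ replaced by $p^{k-1}q$, that is, I set $A = a\,p^{k-1}q + 11$ and $B = b\,p^{k-1}q - 10$ (the choice $t=1$), so that $10A+11B = (10a+11b)p^{k-1}q = p^{k}q$. As before $\gcd(A,B)\mid p$, and the crucial improvement over part (a) is that $k\geq 2$ forces $p\mid p^{k-1}$, so $A\equiv 11\pmod{p}$; since $p\geq 43>11$ we obtain $p\nmid A$ and therefore $\gcd(A,B)=1$ outright, with no case distinction and no lower bound on $q$. Finally $A\geq 1$ is clear and $B\geq 2A+1$ becomes $(b-2a)p^{k-1}q\geq 33$, which holds because $b-2a\geq 1$ and $p^{k-1}\geq p\geq 43$.

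The main obstacle, in both parts, is the coprimality constraint $\gcd(A,B)=1$ rather than the inequality $B\geq 2A+1$: the latter is linear in $t$ and easy to satisfy, whereas the former can fail for the natural choice $t=1$. The resolution I expect to use is the rigidity coming from $bA-aB = pt$, which confines $\gcd(A,B)$ to $\{1,p\}$ when $t=\pm 1$, together with the elementary fact that the two candidates differ by $22<p$. Part (b) is genuinely easier because the extra factor of $p$ inside $p^{k-1}q$ automatically removes the divisibility $p\mid A$, which is exactly the obstruction one must fight by hand in part (a).
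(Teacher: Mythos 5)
Your proposal is correct and is essentially the paper's own argument: for (a) the paper uses the same two perturbations $(aq\mp 11,\, bq\pm 10)$, shows each gcd lies in $\{1,p\}$, and rules out both being $p$ because then $p$ would divide $22$, contradicting $p\geq 43$; for (b) it likewise observes that $p\mid ap^{k-1}q$ while $p\nmid 11$, so the gcd is $1$ outright. The only cosmetic differences are your cleaner identity $bA-aB=pt$ for confining the gcd (the paper instead eliminates prime divisors of $q$ via $q'\mid 11$ and $q'\mid 10$) and the opposite sign of the perturbation in (b), where the paper's choice $(ap^{k-1}q-11,\, bp^{k-1}q+10)$ makes the inequality $b'\geq 2a'+1$ automatic rather than requiring the (easily verified) bound $(b-2a)p^{k-1}q\geq 33$.
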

\begin{proof}
(a) We have $p=10a+11b$, with $\gcd(a,b)=1$, $b\geq 2a+1$, $a\geq 1$. For all integer $r$, with $aq\geq 1+11r,$ we can write
 \begin{equation}\label{Eq:pyr}
pq=(10a+11b)q=10(aq)+11(bq)=10(aq-11r)+11(bq+10r).
\end{equation}
Take $r=1$, that is, $pq=10(aq-11)+11(bq+10)$. Obviously, $bq+10\geq 2\left(aq-11\right)+1.$  
Let $d_1=\gcd\left(aq-11,bq+10\right)$.  Notice that $d_1|pq$. If $d_1$ contains some divisor $q'$ of $q$, then $q'|(aq-11)$ and $q'|(bq+10)$; then, $q'| 11 $ and $q'| 10 $, so $q'=1$. Consequently, $d_1$ only contains divisors of $p$, and being $p$ prime, we deduce that $d_1 \in \{1,p\}$. 

-- If $d_1=1$, we finish by imposing the additional property of being $aq\geq 12.$ 

-- If $d_1=p$, since $\gcd(p,q)=1$ and $d|p$, we deduce that $d|(aq)$ and $d|(bq)$, so $d|a$ and $d|b$ according to the above observation on the divisors of $q$. Finally, taking into account that $\gcd(a,b)=1$, we conclude that $d=1$, and $pq\in \mathcal{A}$ whenever $aq\geq 1+ 11p$. 
We refine our bound for the values $q$ in the following way. 
we take $r=-1$ in~(\ref{Eq:pyr}) to set $pq=10(aq+11)+11(bq-10).$ 
Notice that we must have $bq-10\geq 2aq +23$, that is, $bq\geq 2aq +33$. Put $d_2=\gcd(aq+11,bq-10)$.  
Similarly to the above study for $d_1$, it is easy to deduce that $d_2\in\{1,p\}$. If $d_2=1$ we finish the proof by requiring 
$aq\geq 12$ and $bq\geq 2aq +33$ (in particular, if $q\geq 33$, we simultaneously get the two inequalities). Otherwise, for $d_1=d_2=p$, we would obtain $aq-11=pu_1$ and  $aq+11=pu_2$ for some positive integers $u_1,u_2$, and then $22=p(u_2-u_1)$, which would imply that $p$ divides $2$ or $11$, contrary to the fact that $p\geq 43$. 

(b) For $k\geq 2$, we write $p^kq=pp^{k-1}q=(10a+11b)p^{k-1}q=10(ap^{k-1}q)+11(bp^{k-1}q)$, or $p^kq=10(ap^{k-1}q-11)+11(bp^{k-1}q+10).$ Denote $a'=ap^{k-1}q-11, b'=bp^{k-1}q+10.$ Of course, $b'\geq 2a'+1,$ as it can be easily verified. Put $d=\gcd(a',b')$. We are going to prove that $d=1$, which will end the proof. 

If $d$ contains some divisor $q'$ of $q$ in its factorization, then $q'|a'$ and $q'|b'$, thus $q'|11$ and $q'|10$. Therefore, $q'=1$ and $d|p^k$. If $p|d$, then $p$ would divide both $a'$ and $b'$, and consequently $p|10$ and $p|11$ 
(notice that $k-1\geq 1$), so $p=1$, a contradiction. We conclude that $d=1$.    
\end{proof}

In the above results we have excluded $p=11 \in \mathcal{A}$. Next, we obtain the elements of $\mathcal{A}$ of the form $11^kq$, $k\geq 1$, and $\gcd(11,q)=1$. 

\begin{proposition}\label{P:para11}
Consider $11$, $a=0, b=1$. It holds:
\begin{itemize}
\item[(a)] $11^k q\in \mathcal{A}$ for all $k\geq 3$ and for all $q\geq 1$ with $\gcd(11,q)=1$.
\item[(b)] $11^2q\in \mathcal{A}$ for all $q\geq 3$ with $\gcd(11,q)=1$. Moreover, $11^2$ and $11^2\cdot 2$ do not belong to $\mathcal{A}.$
\item[(c)] $11q\in \mathcal{A}$ whenever $\gcd(11,q)=1$ and 
$$q\in\{1\}\cup\left(\{q: q\geq 33\} \setminus\{43, 54, 76, 120\}\right).$$
\end{itemize}
\end{proposition}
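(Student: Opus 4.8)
The plan is to handle the three cases separately, since they correspond to powers $11^3$ and higher, $11^2$, and $11^1$, each requiring progressively more delicate control because the factor of $11$ in $11^k q$ interacts badly with the representation $11\cdot b$. For part~(a), I would write $11^k q = 10\cdot a' + 11\cdot b'$ and exploit that we can trade $11$ units of $a$ for $10$ units of $b$ (and vice versa) via the identity $10(a-11)+11(b+10)=10a+11b$, exactly as in Proposition~\ref{P:formacionP}. Starting from the trivial $11^k q = 10\cdot 0 + 11\cdot(11^{k-1}q)$, I would shift to $a'=10j$, $b'=11^{k-1}q-11j$ for a suitable $j\geq 1$, then verify the three required conditions: $b'\geq 2a'+1$ (easy for $k\geq 3$ since $11^{k-1}q$ is large relative to the shift), $a'\geq 1$ (automatic), and crucially $\gcd(a',b')=1$. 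For coprimality I would choose $j$ so that $a'=10j$ carries a prime not dividing $b'$; the fact that $k\geq 3$ gives enough room in $b'=11(11^{k-2}q-j)$ to make a clean coprime choice.

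\emph{The main obstacle} is controlling $\gcd(a',b')=1$ uniformly in $q$, because $b'$ always contains a factor of $11$ while $a'$ is a multiple of $10$, and one must rule out \emph{all} common prime divisors, not just $11$. The cleanest device is to pick the shift $j$ so that $a' = 10$ (i.e. $j=1$), giving $b' = 11(11^{k-2}q - 1)$, and then $\gcd(10, b')$ reduces to checking whether $2$ or $5$ divides $11^{k-2}q-1$. Since $q$ ranges over all integers coprime to $11$, this gcd need not be $1$ in general, so I expect to need a case analysis on $q \bmod 10$, adjusting $j$ (e.g. $j=2,3,\ldots$) to land on an $a'$ coprime to the corresponding $b'$. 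The payoff of $k\geq 3$ is that $11^{k-2}\geq 11$, so there is always enough slack to perform several such shifts while keeping $b'\geq 2a'+1$; this is what fails for smaller $k$ and forces the separate treatment in~(b) and~(c).

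For part~(b) with $k=2$, the same shifting works but the inequality $b'\geq 2a'+1$ and coprimality are now tight, which is why the threshold rises to $q\geq 3$ and why the two genuine exceptions $11^2$ and $11^2\cdot 2$ appear; I would verify directly that for $q=1$ and $q=2$ \emph{no} admissible representation exists by exhausting the finitely many decompositions $121=10a+11b$ and $242=10a+11b$ with $b\geq 2a+1$ and checking each violates $\gcd(a,b)=1$ or the inequality, and then produce an explicit working shift for each residue class of $q\geq 3$. Part~(c) is the most delicate: here $11q=10a+11b$ forces $a$ to be a multiple of $11$, say $a=11t$, so $11q=110t+11b$ gives $q=10t+b$ with $b=q-10t$ and the constraints become $q-10t\geq 2\cdot 11t+1$ and $\gcd(11t,\,q-10t)=1$; I would parametrize by $t\geq 1$, note $b\geq 2a+1$ amounts to $q\geq 32t+1$, and then show that for every $q\geq 33$ outside the listed exceptions $\{43,54,76,120\}$ some $t$ yields $\gcd(11t,q-10t)=1$ together with the inequality, while the four exceptional $q$ (and $q=1$, handled separately as the base point $11=11\cdot 1$) must be checked by hand to confirm membership or non-membership. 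I expect the bulk of the real work, and the source of the specific exceptional list, to live in this coprimality-versus-inequality tension in part~(c).
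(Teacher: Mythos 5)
Your part (a) contains an arithmetic error that propagates through the whole plan. Starting from $11^kq = 10\cdot 0 + 11\cdot(11^{k-1}q)$ and applying your own stated identity $10(a+11)+11(b-10)=10a+11b$, the reachable pairs are $a'=11j$, $b'=11^{k-1}q-10j$, \emph{not} $a'=10j$, $b'=11^{k-1}q-11j$: with your pair one gets $10a'+11b' = 11^kq - 21j \neq 11^kq$. Worse, your ``cleanest device'' $a'=10$ is impossible outright: since $11 \mid 11^kq = 10a'+11b'$, necessarily $11 \mid a'$ --- a constraint you yourself invoke correctly in part (c) but violate in (a). Once the shift is corrected, the single step $j=1$ gives $a'=11$, $b'=11^{k-1}q-10$, and the coprimality you flag as ``the main obstacle'' is automatic: $\gcd(11,\,11^{k-1}q-10)=1$ because $11$ is prime and $b'\equiv -10 \not\equiv 0 \pmod{11}$. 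This is exactly the paper's proof of (a) and of the positive half of (b) (the inequality $b'\geq 2a'+1$, i.e.\ $11^{k-1}q-10\geq 23$, holds precisely when $k\geq 3$, or $k=2$ and $q\geq 3$), and no case analysis on $q \bmod 10$ is needed; the residue-class adjustment of $j$ you anticipate is solving a problem created only by the wrong parametrization. Your hand check of $q=1,2$ via the finitely many decompositions of $121$ and $242$ does match the paper.

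In part (c) your parametrization $a=11t$, $b=q-10t$, with $b\geq 2a+1$ equivalent to $q\geq 32t+1$, is exactly the paper's setup, but what you offer beyond it is a restatement of the goal rather than an argument: ``show that some $t$ yields $\gcd(11t,\,q-10t)=1$'' is precisely what must be proved, and it is not routine, because for each fixed $t$ infinitely many $q$ fail the gcd condition (e.g.\ $t=1$ fails whenever $q\equiv 10 \pmod{11}$). One therefore needs both a finite sieve and a uniform device for large $q$. The paper supplies these: $t=1$ settles all $q\geq 33$ with $q\not\equiv 10 \pmod{11}$; on the residual class $q=10+11m$ it runs $t=2$ (handles odd $q\geq 65$, since $\gcd(22,q-20)\in\{1,11\}$ and $11\nmid q-20$), $t=3$ (sieves the even survivors modulo $3$, available only for $q\geq 97$), and $t=5$ (disposes of $186,252,318$), leaving exactly $\{43,54,76,120\}$, which are excluded by exhausting their finitely many decompositions; crucially, $t=11$ closes out all $q\geq 353$ at once, since $a=121$, $b=q-110$ gives $\gcd(121,q-110)=1$ automatically from $\gcd(11,q)=1$, with $q-110\geq 2\cdot 121+1$. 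Without some such endgame for unbounded $q$ --- managing the tension between the thresholds $q\geq 32t+1$ and the residues excluded at each stage --- your plan does not yet constitute a proof of (c), and that winnowing is also where the exceptional list $\{43,54,76,120\}$ actually comes from.
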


\begin{proof}
Write $11^kq=11\cdot\left(11^{k-1}q-10\right)+10\cdot(11).$ Whereas $k\geq 3$ or $k=2, q\geq 3$, the inequalities $11^{k-1}q-10>0$ and $11^{k-1}q-10\geq 23$ are correct, and the decomposition is permitted. Then, obviously $\gcd\left(11,11^{k-1}q-10\right)=1$ and $11^kq\in \mathcal{A}$.  It remains two numbers to be analyzed, namely, $11^2, 11^2\cdot 2.$ Since $11^2=110+11$ and $11^2\cdot 2=110 + 132 =220+20$, we deduce that both numbers do not belong to $\mathcal{A}$. This completes Cases (a) and (b).

(c) Concerning the numbers of the form $11q$, we distinguish several cases. If $1\leq q \leq 10$, then the only possible decomposition is given by 
$11q=11q+10\cdot 0$, thus we exclude these values from $\mathcal{A}$, except $11$. 

If $11\leq q \leq 32$, the numbers $11q$ admit the decompositions: $0+11q, 10\cdot 11 + 11(q-10), 10\cdot 22 + 11(q-20)$, each of which is not admissible as an element of $\mathcal{A}$, either because $\gcd(0,q)=q>1$ or because the inequality $b\geq 2a +1$ is violated. Hence, there are not elements of $\mathcal{A}$ in the set $\{11q:11\leq q\leq 32\}$.

If $q\geq 33$,  we claim that all the elements $11q$ belong to $\mathcal{A}$, except for $q\in\{43,54,76,120\}$. As a first step, we show that $11q\in \mathcal{A}$ whenever $q\geq 353$. This is clear, from Cases (a)-(b), when $q$ is a multiple of $11$. If $q$ and $11$ are  coprime, write $11q=10\cdot(121) + 11\cdot (q-110)$. Notice that $q-110\geq 2\cdot 121 +1$, since $q\geq 353$. Also, $\gcd(121,q-110) = 1$, due to the fact that $q$ and $11$ are coprime numbers.  

Next, we focus on the range $33\leq q\leq 352$. Now, it makes sense to write $11q=10\cdot 11 + 11\cdot (q-10)$ since $q-10\geq 23.$ If $q-10\neq \dot{11}$, then $\gcd(11,q-10)=1$ and we finish. Then, it remains to analyze the case in which $q-10=11m$, $33\leq q \leq 352$.  This gives us the following set of candidates to be elements of $\mathcal{A}$ of type $q=10+11m$, 
\begin{eqnarray*}
C_q^0:&=&\big\{43, 54, 65, 76, 87, 98, 109, 120, 131, 142, 153, 164, 175, 186, 197,\\
 & & 208, 219, 230, 241, 252, 263, 274, 285, 296, 307, 318, 329, 340, 351\big\}.
\end{eqnarray*}
For these elements $q\in C_q^0$, except $43, 54,$ we write $11q=10\cdot(22)+ 11\cdot(q-20)$ (necessarily, $q\geq 65$ if we try to obtain an element of $\mathcal{A}$), and we consider $d_0=\gcd(22,q-20)$. If $q$ is odd, then $q-20$ so is, and $d_0\in\{1,11\}$ in this situation. If $11|d_0$, then $11|(q-20)$ or $d_0|(11m-10)$; as a consequence, $d_0|10$, which is impossible. Hence, $11q$ is an admissible number if $q$ is odd, $65\leq q\leq 353$. So, we discard odd numbers $q$ in $C_q^0$, except $43$, and the set is reduced to $C_q^1:=\big\{43, 54, 76,  98,  120,  142,  164,  186, 
 208,   230,   252,  274,  296,   318,  340\big\}.$

Next, consider the even elements of $C_q^1$, of type $54+22j, 0\leq j\leq 13,$ and write $11q=10\cdot (33) + 11\cdot(q-30)=10\cdot (33) + 11\cdot(24+22j),\, j=0,1,\ldots,13$
(as we are forced to obey the inequality $q-30\geq 2\cdot 33 +1$, we consider $q\geq 97$). 
If $d_1=\gcd(33,24+22j)$, since $24+22j=11(2+2j)+2$ and $33=3\cdot 11$, we deduce that $d_1\in\{1,3\}.$ Therefore, if 
$24+22j$ is not a multiple of $3$, we find that $d_1=1$ and the corresponding value $11q$ belongs to $\mathcal{A}.$
 This happens whenever $j\notin\{0,3,6,9,12\}$. Subjected to the constraint $q\geq 97$,  the above study allows us to say that $11q$ is an element of $\mathcal{A}$, 
except, maybe, if $q$ is confined in the new subset $C_q^2:=\big\{43, 54, 76,   120,   186, 
  252,   318 \big\}.$ 
  
Now, we consider the decomposition $11q=10\cdot (55) + 11\cdot(q-50)$, where it is necessary to assume that $q\geq 161$, so it applies for the numbers 
$186, 252, 318$. Since $q-50$ is then $136, 202, 268$, respectively, we find that $\gcd(55,q-50)=1$ for these numbers, and they belong to $\mathcal{A}$. By this analysis, $C_q^2$ is reduced to $C_q^3:=\big\{43, 54, 76,   120 \big\}.$ For these numbers, it is easy to check that neither of the possible combinations $10a+11b$ satisfy simultaneously the two conditions $\gcd(a,b)=1$ and $b\geq 2a+1$: 
\begin{eqnarray*}
11\cdot 43= 473 &=&10\cdot 11 + 11\cdot 33 = 10\cdot 22  + 11\cdot 23\\
&=&10\cdot 33  + 11\cdot 13=10\cdot 44  + 11\cdot 3, \\
 11\cdot 54 = 594 &=& 10\cdot 11 + 11\cdot 44 = 10\cdot 22  + 11\cdot 34\\&=&10\cdot 33  + 11\cdot 24=10\cdot 44  + 11\cdot 14=10\cdot 55  + 11\cdot 4, \\
 11\cdot 76 = 836 &=& 10\cdot 11 + 11\cdot 66 = 10\cdot 22  + 11\cdot 56 = 10\cdot 33  + 11\cdot 46\\
&=&10\cdot 44  + 11\cdot 36=10\cdot 55  + 11\cdot 26=10\cdot 66  + 11\cdot 16 \\
&=&10\cdot 77  + 11\cdot 6, \\
11\cdot 120 = 1320 &=& 10\cdot 11 + 11\cdot 110 = 10\cdot 22  + 11\cdot 100 = 10\cdot 33  + 11\cdot 90\\
&=&10\cdot 44  + 11\cdot 80=10\cdot 55  + 11\cdot 70=10\cdot 66  + 11\cdot 60\\
&=&10\cdot 77  + 11\cdot 50 = 10\cdot 88  + 11\cdot 40=10\cdot 99  + 11\cdot 30 \\
&=&10\cdot 110  + 11\cdot 20=10\cdot 121  + 11\cdot 10.
\end{eqnarray*}

\end{proof}

\begin{corollary}\label{C:cota11} It holds that $1320 = \max\{11\cdot n: n\geq 1, 11\cdot n\notin \mathcal{A}\} $.
\end{corollary}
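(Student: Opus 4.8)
The plan is to read this off directly from Proposition~\ref{P:para11}, which already gives a complete classification of the multiples of $11$ lying in $\mathcal{A}$. The starting observation is that every positive multiple of $11$ admits a unique representation $11^k q$ with $k\geq 1$ and $\gcd(11,q)=1$; hence, to locate the largest multiple of $11$ outside $\mathcal{A}$, it suffices to run through the three cases of Proposition~\ref{P:para11} according to the value of $k$ and to collect the finitely many exceptions that arise.

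First I would dispose of the tails. By part (a), every number $11^k q$ with $k\geq 3$ and $\gcd(11,q)=1$ belongs to $\mathcal{A}$, so this case contributes no exceptions. By part (b), among the numbers $11^2 q$ with $\gcd(11,q)=1$ only $q=1$ and $q=2$ fail, giving the two exceptional multiples $11^2=121$ and $11^2\cdot 2=242$. Finally, part (c) handles $k=1$: the number $11q$ (with $\gcd(11,q)=1$) lies in $\mathcal{A}$ precisely when $q\in\{1\}\cup\left(\{q:q\geq 33\}\setminus\{43,54,76,120\}\right)$, so the multiples of $11$ of this type that are \emph{not} in $\mathcal{A}$ are exactly those with $q\in\{2,\ldots,32\}$, $\gcd(11,q)=1$, together with $q\in\{43,54,76,120\}$.

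It then remains to maximize over this finite list. In the case $k=1$ the largest offending value is $11\cdot 120=1320$, since the exceptions with $q\leq 32$ are all at most $11\cdot 32=352$; the case $k=2$ never exceeds $242$, and the case $k\geq 3$ is empty. Therefore $1320$ dominates every other multiple of $11$ outside $\mathcal{A}$, and since $1320=11\cdot 120\notin\mathcal{A}$ was established in Proposition~\ref{P:para11}(c), it is indeed the maximum, as claimed. The argument presents no genuine obstacle beyond the bookkeeping: the only point requiring care is to confirm that the single sporadic value $q=120$ in part (c) really dominates all the smaller exceptions coming from the three cases, which the bound $11\cdot 32<11\cdot 120$ settles immediately.
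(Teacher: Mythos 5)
Your proposal is correct and follows exactly the route the paper intends: the corollary is stated as an immediate consequence of Proposition~\ref{P:para11}, whose three cases classify all multiples of $11$ in $\mathcal{A}$ via the unique factorization $11^k q$ with $\gcd(11,q)=1$, and the maximum of the finitely many exceptions is $11\cdot 120=1320$. Your bookkeeping (exceptions $11q$ with $2\leq q\leq 32$ coprime to $11$ or $q\in\{43,54,76,120\}$, plus $121$ and $242$ from the case $k=2$) matches the paper's classification, so there is nothing to add.
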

Once we have described the main properties concerning prime numbers as well as a detailed study on which multiples of $11$ belong to $\mathcal{A},$ 
we are now interested in proving that the set $\mathcal{NA}:=\mathbb N\setminus \mathcal{A}$ is bounded. In fact our objective is to calculate the maximum of 
$\mathcal{NA}$.

The strategy consists in dividing the set of natural numbers, not multiple of $11$, in ten different classes, $\mathcal{C}_m:=\left\{10m + 11k, \, k\geq 0\right\}$, where we fix the value $m\in\{1,2,\ldots,10\}$. For each class $\mathcal{C}_m$, we will show that $\mathcal{NA}\cap\mathcal{C}_m$ is bounded, and from the inspection of each subset $\mathcal{NA}\cap\mathcal{C}_m$ we will deduce the maximum of $\mathcal{NA}.$ To develop the study of those classes, we use the following basic fact:
\begin{quote}\label{q:Fact}
Given a natural number $N$, $N\neq \dot{11}$, there exists a unique $m\in\{1,2,\ldots,10\}$ such that $N-10m=\dot{11}.$
\end{quote} 

\subsection{The class $\mathcal{C}_1$} Notice that a number $n\in\mathcal{C}_1$ can be written as $n=10+11b$, where $a=1$ and $\gcd(b,1)=1$. To assure that $n$ is in $\mathcal{A}$, it is necessary to require $b\geq 3$. Therefore, $n\in \mathcal{A}$ for all $n\geq 43$. Additionally, $n=10, 21, 32$ belong to $\mathcal{NA}$ and 
$N_1:=\max\left\{ \mathcal{NA}\cap\mathcal{C}_1\right\}=32.$ 

\subsection{The class $\mathcal{C}_2$} Let $n\in\mathcal{C}_2$, $n=20+11b,\, b\geq 0.$ Here, we take $a=2$.

$\bullet$ If $b$ is odd, with $b\geq 5$, then $n\in \mathcal{A}$  since $\gcd(b,a)=1$. On the other hand, for $b=1$ and $b=3$, we find that $31$ and $53$ belong to $\mathcal{NA}.$ 

$\bullet$ Assume that $b$ is even, $b=2j$, with $j\geq 1$ (if $j=0$ we have $n=20\in\mathcal{NA}$). Since $\gcd(a,b)=2$, we need to manage another decomposition, $n=130+11(b-10)$, that is, $n=10\cdot 13 + 11\cdot(2j-10).$ This is possible if $2j-10\geq 27$, so $j\geq 19$. When $j\leq 18$, the numbers $n=130+11(2j-10)$ belong to $\mathcal{NA}.$ 

We stress that $438,460,482,504,526 \in \mathcal{A}$ as we have the decompositions 
\begin{eqnarray*}
438&=&10\cdot 13 +11\cdot 28,\\ 
460&=&10 \cdot 13 +11 \cdot 30, \\ 
482&=&10\cdot 13 +11 \cdot 32,\\
504&=& 10 \cdot 13 +11 \cdot 34,\\
526&=&10 \cdot 13 +11 \cdot 36,
\end{eqnarray*}
but, on the contrary, $416\in\mathcal{NA}$ since the number admits the following decompositions
$$416=10\cdot 2+ 11\cdot 36=10\cdot 13+ 11\cdot 26=10\cdot 24+ 11\cdot 16=10\cdot 35+ 11\cdot 6,$$
and neither verifies simultaneously the two conditions $\gcd(a',b')=1, b'\geq 2a'+1$ on the coefficients of $10a'+11b'$. 

We continue the analysis of this subcase, $n=10\cdot 13 + 11\cdot(2j-10),\,$ $j\geq 19$. If $2j-10$ is not a multiple of $13$, or $j-5\neq \dot{13}$, we ensure that $n$ belongs to $\mathcal{A}$. Instead, if $j=5+13k$, with $k\geq 2$, then $n=10\cdot 13+ 11\cdot (26k)$. Notice that $26|n$, with $n=26\left(5+11k\right).$ 
By descending in the decomposition, we can write $n=10\cdot 35 + 11\cdot(26k-20)$. 

Then, if $26k-20$ is not divisible neither by $5$ nor $7$, we finish once we guarantee that $26k-20\geq 71$, which can be accomplished when $k\geq 4$. In this case, the remaining cases $k=2,3$ provide the numbers $n=702,988$ both of them belonging to $\mathcal{NA}$, as can be easily checked by hand. 

If, on the contrary, $26k-20$ contains $5$ or $7$ in its factor decomposition, with $k\geq 5$, as a first observation we mention that $n$ will be divisible by $26\cdot 5$ or $26\cdot 7$. Secondly, realize that in the sequence $\left\{x_r:r\geq 0\right\}=\left\{13+11r:r\geq 0\right\}=\left\{13,24,35,46,57,68,79,\ldots\right\}$ we find 
$x_{36}=409$, a prime number in $\mathcal{A}$. If in the decompositions 
\begin{equation}\label{Eq:paraCaso2}
n=10\cdot (13+11r) +11(26k-10r)
\end{equation} 
we are not able to achieve $\gcd(13+11r,26k-10r)=1$ for $r=2,\ldots,36$, at least we know that $n$ can be divided by $26$, $5$ or $7$, $79$ and $409$, so $n=409\cdot t$, with $t\geq 26\cdot 79$. By Proposition~\ref{P:formacionP}, this implies that $n\in \mathcal{A}$. In order to carry out this reasoning, for $r=36$ it is necessary that in (\ref{Eq:paraCaso2}) either $26k-360\geq 2\cdot (409)+1$ if $\gcd(409,26k-360)=1$ or $26k-360\geq 1$, otherwise; in both cases, it suffices to take $k\geq 46$. For the remaining values, $5\leq k\leq 45$, let consider $r=6$, $n=10\cdot 79 + 11\cdot(26k-60)$. Notice that $\gcd(79,26k-60)\neq 1$ if and only if $k=57+79s$ with $s\in \mathbb{N}$. Also, $26k-60 \geq 2\cdot 79 + 1$ is equivalent to $k\geq 9$. Thus, for $9\leq k \leq 45$, the decomposition will be acceptable and $n\in \mathcal{A}$. Finally, for $5\leq k \leq 8$, as we assumed that $26k-20$ was divisible by 5 or 7, the only possible value is $k=5$. So, $n = 1560$ and neither of its possible decompositions verify simultaneously the two required conditions $\gcd(a',b')=1$ and $b'\geq 2a'+1$.
Recall that we had assumed that $26k-20$ was divisible by $5$ or $7$. This refines the analysis to the values $k=5,10,15,20,25,30,35, 40, 45$ (divisibility of $13k-10$ by $5$) and $k= 11, 18, 25, 32, 39$ (divisibility of $13k-10$ by $7$), when we replace them in $n=130+286k$.  With the help of a personal computer and a rather rudimentary algorithm, we obtain the values 
$$1560,2990,4420,5850,7280,8710,10140,11570,13000$$ and $$3276,5278,7280,9282,11284,$$ respectively, and we check that, among them, $1560$ is the unique element in $\mathcal{NA}$. It is easy to see that neither of the its possible decompositions verify simultaneously the two required conditions $\gcd(a',b')=1$ and $b'\geq 2a'+1$. For the rest of numbers we find the following admissible pairs $[a',b']$:
\[
[57,220],[57,350],[79,460],[57,610],[57,740],[79,850],[57,1000],[57,1130]
\]
(for $2990,4420,5850,7280,8710,10140,11570,13000$), and 
\[
[79,226],[57,428],[57,610],[79,772],[57,974]
\]
(for $3276,5278,7280,9282,11284$).

This concludes the case $b$ even in $\mathcal{C}_2$. From our analysis, we highlight that 
$$N_2:=\max\left\{\mathcal{NA}\cap \mathcal{C}_2\right\}=1560.$$

 \subsection{The class $\mathcal{C}_3$} We find the maximum of $\mathcal{NA}$ in 
$\mathcal{C}_3=\left\{30 + 11k, \, k\geq 0\right\}.$ Let $n\in\mathcal{C}_3$, $n=30+11b,\, b\geq 0.$ First, notice that $n=30, 41, 52, 63, 74, 85, 96,$ are the values corresponding to $b=0,\ldots,6,$ respectively, and it is simple to see that they belong to $\mathcal{NA}$. Hence, we start with $b\geq 7$ and $a=3$.

$\bullet$ If $b$ is not a multiple of $3$ with $b\geq 7$, we find that $\gcd(b,3)=1$ and all the elements $30+11b$, with $b\neq\dot{3}$ are in $\mathcal{A}.$

$\bullet$ Assume that $b=3j$, with $j\geq 3$. Since $\gcd(3,3j)=3$, we employ the decomposition $n=10\cdot 14 + 11\cdot (3j-10)$. It has sense if we suppose that $j\geq 4$ and, in order to be $n$ in $\mathcal{A}$, we must impose $3j-10\geq 2\cdot 14 +1,$ that is, $j\geq 13$. For the remaining values, $3\leq j\leq 12$, we find the numbers $n\in\left\{129,162,195,228,261,294,327,360,393,426\right\}$ which, as can be checked, belong to $\mathcal{NA}$. (for instance, neither of the decompositions $426=30+396=10\cdot 3 + 11\cdot 36 = 10\cdot 14 + 11\cdot 26 =10\cdot 25 + 11\cdot 16 =10\cdot 36 + 11\cdot 6$) is admissible in order to $426$ belong to $\mathcal{A}$). 

If $\gcd(14,3j-10)=1$, for $j\geq 13$ we guarantee that $n\in \mathcal{A}$. This holds	whenever $j=13$ and for the odd numbers $j\geq 15$ such that $j\neq 15+14u, u\geq 0$, namely $$j\in\{17,19,21,23,25,27, 31,33,35,37,39, 41,\ldots\}.$$
But if $j\geq 14$ is even or $j=15+14k,$ with $k\geq 0$, we obtain that $\gcd(14,3j-10)>1$ ($n$ can be divided either by $2$ or by $7$) and we need to employ another decomposition of $n$. If we vary $a'$ in $10a'+11b'$ in an ascendant way, we obtain the successive decompositions 
\begin{equation}\label{Eq:paraCaso3}
n=10\cdot (14+11r) +11\cdot (3j-10- 10r), \, r\geq 0.
\end{equation} 
For $r=1$, we have $n=10\cdot 25 +11(3j-20)$. In order to obtain elements of $\mathcal{A}$, it must be $3j-20\geq 51$, to wit $j\geq 24$.  Since we know that $j=2k$ with $k\geq 7$ or $j=15+14k,$ with $k\geq 0$, the restriction $j\geq 24$ enforces us to check the values $j=14,15,16,18,20,22$. For them, their respective numbers $n$ are $492, 525, 558, 624, 690, 756$, and it can be verified that all of them are elements of $\mathcal{NA}$. Since we are interested in the location of the maximum of $\mathcal{NA}$, for the convenience of the reader we present the decompositions of $756$ which assure us that this number is not in $\mathcal{A}$:
{\small{\begin{eqnarray*}
756&=&10\cdot 3 + 11\cdot 66=10\cdot 14+ 11\cdot 56= 10\cdot 25 + 11\cdot 46 = 10\cdot 36 + 11\cdot 36\\
&=& 10\cdot 47 + 11\cdot 26=10\cdot 58 + 11\cdot 16 = 10\cdot 69 + 11\cdot 6.
\end{eqnarray*}}}
Now, once we restrict ourselves to $j\geq 24$, we note that if $3j-20$ is not a multiple of $5$ then $n=30+33j$ is an element of $\mathcal{A}$. Since either $j=2k$ ($k\geq 12$)  or $j=15+14k$ (with $k\geq 1$), $3j-20$ is  a multiple of $5$ whenever either $k=5w$ for $w\geq 3$ or $j=15+70w$ for $w\geq 1$, and in these cases is necessary to use a new decomposition. 

We then pass to use $r=3$, i.e, $n=10\cdot 47 +11(3j-40)$. This implicitly means that $3j-40\geq 2\cdot 47 +1,$ so it must be $j\geq 45$. In that case, we need to analyze independently the remaining lest values of $j$, namely $j=30,40$ (recall that we have in this point that $j=10w, w\geq 3$ or $j=15+70w, w\geq 1$, and $n=30+33j$), which provide the numbers $n=1020, 1350$, respectively. Both of them are elements of $\mathcal{NA}$. 

(for instance 
\begin{eqnarray*}
1350&=&10\cdot 3 + 11\cdot 120=10\cdot 14+ 11\cdot 110= 10\cdot 25 + 11\cdot 100\\
&=&10\cdot 36 + 11\cdot 90 = 10\cdot 47 + 11\cdot 80=10\cdot 58 + 11\cdot 70= \cdots,
\end{eqnarray*}
and neither of them satisfy the conditions to be included in $\mathcal{A}$). 
In the sequel, we consider $j=10w$ with $w\geq 5$, or $j=15+70w, w\geq 1$.

Since the case $r=3$  has associate the value $g:=\gcd(47, 3j-40)\in\{1,47\}$, if $3j-40$ is not a multiple of $47$, we will include $n=30+33j$ in $\mathcal{A}$. Otherwise, $3j-40$ is a multiple of $47$ whenever either $30w-40, w\geq 5$ is multiple of $47$ or $3(15+70w)-40, w\geq 1$ so is; that is, whenever either $3w-4=\dot{47}, w\geq 5$ or $1+42w=\dot{47}, w\geq 1$. In the first case, $w=17+47u$, $u\geq 0$, and in the second one, $w=19+47u, u\geq 0$. Hence, either $j=10(17+47u)=170+470u, u\geq 0$, or $j=15+70(19+47u)=1345+3290u, u\geq 0$. For these values of $j$, we must use a new $n=10\cdot (14+11r) +11(3j-10-10r)$. 

In the sequence $\left\{x^{(3)}_r:r\geq 0\right\}=\left\{14+11r:r\geq 0\right\}=$ $\left\{14,25,36,47, \ldots\right\}$ we find 
$x^{(3)}_{27}=311$, a prime number in $\mathcal{A}$. From this fact, if in the decompositions 
$n= 10\cdot (14+11r) + 11(3j-10- 10r)$ we are not able to attain a value $r=1,2,3,\ldots, 27$ such that $\gcd(14+11r,3j-10-10r)=1$, at least we know that $n$ can be divided by $3$, $2$ or $7$, $47$ and $311$, so $n=311\cdot t$, with $t\geq 6\cdot 47$. Then, Proposition~\ref{P:formacionP} gives $n\in \mathcal{A}$. In order to be able to use this argument, for $r=27$ it is necessary that in (\ref{Eq:paraCaso3}) either $3j-10 -10\cdot 27\geq 2\cdot (311)+1$ if $\gcd(311,3j-10-10\cdot 27)=1$ or $3j -10 - 10\cdot 27\geq 1$, otherwise; in both cases, it suffices to take $j\geq 301$. Taking into account that $j=170+470u, u\geq 0$, or $j=1345+3290u, u\geq 0$, this choice is always suitable, except for the value of $j=170$. Here, we find the number $n=30+33\cdot 170= 5640$ which, in fact, also belongs to $\mathcal{A}$ since $5640=10\cdot 91 +11\cdot 430$, with $\gcd(91,430)=1$ and $430\geq 184$.  
This ends the case $b=3j$  in $\mathcal{C}_2$. From our analysis, it is worth mentioning that $$N_3:=\max\left\{\mathcal{NA}\cap \mathcal{C}_3\right\}=1350.$$
	
\subsection{The class $\mathcal{C}_4$} We want to discuss if there are elements of $\mathcal{NA}$ in 
$\mathcal{C}_4=\left\{40 + 11k, \, k\geq 0\right\},$ and to find the maximum of 
$\mathcal{NA}\cap \mathcal{C}_4.$ Let $n\in\mathcal{C}_4$, $n=40+11b,\, b\geq 0.$

The first values of $\mathcal{C}_4$ are $n=40, 51, 62, 73, 84, 95, 106, 117, 128,$ corresponding to $b=0,\ldots,8$ respectively, and it is easily seen that all of them belong to $\mathcal{NA}$. Hence, we start our analysis with $b\geq 9$. Realize that $a=4$.

$\bullet$ If $b$ is odd, $b\geq 9$, then automatically $n=40+11b\in \mathcal{A}$ as the conditions $\gcd(a,b)=1$ and $b\geq 2a+1$ are guaranteed.

$\bullet$ If $b$ is even, $b=2j$, $j\geq 5$, we need to change the decomposition. Firstly, we consider $n=10\cdot 15 + 11\cdot (2j-10)$. Since it must be $2j-10\geq 31$, we need $j\geq 21$. Thus, if $5\leq j\leq 20$, all the numbers $40+22j$ i.e., $\{150,172,194,216,238,260,282,304,326,348,370,392,414,436,458,480\}$ 
belong to $\mathcal{NA}$, (for instance, $480=10\cdot 4 +11\cdot 40=10\cdot 15 +11\cdot 30=10\cdot 26 +11\cdot 20=\cdots$). 

From here, we will assume that $j\geq 21$. If $2j-10$ is neither a multiple of $3$ nor a multiple of $5$, the number $n$ will be in $\mathcal{A}$. Otherwise, either $j=5k, k\geq 5$ if $5|(2j-10)$, or $j=8+3k, \ k\geq 5$, if $3|(2j-10)$. Similarly to the studies developed for $\mathcal{C}_2$ and $\mathcal{C}_3$, we can write $n$ as follows:
\begin{equation}\label{Eq:paraCaso4}
n=10\cdot (15+11r) +11(2j-10-10r), \, r\geq 0.
\end{equation} 
For $r=2$, $n=10\cdot 37+11\cdot (2j-30)$ will be in $\mathcal{A}$ if $2j-30\geq 75$, that is, if $ j\geq 53$. As we consider that $j=5k, k\geq 5$  or $j=8+3k, k\geq 5$, the decomposition cannot be performed if $j=25,30,35,40,45, 50$ or for $j=23, 26, 29, 32, 35, 38, 41, 44, 47, 50$. For those values, their corresponding $n$ are elements of $\mathcal{NA}$. 

we obtain the numbers $\{590,$ $700,$ $810,$ $920,$ $1030,$ $1140\}$ and $ \{612,678,$ $744,$ $810,$ $876,$ $942,$ $ 1008,$ $1074,$ $1140\},$ respectively. All those numbers are elements of $\mathcal{NA}$. 

Until now, he have detected the values of $\mathcal{NA}$ having the form $40+22j$, when $j<53$. Now, 
if $j\geq 53$ and $2j-30$ is coprime with $37$, we finish: every number will belong to $\mathcal{A}$.  Otherwise, if $37|(2j-30)$, then either 
$37|(10k-30)$, with $k\geq 11$ (when $j=5k$) or $37|(6k-14)$, with $k\geq 15$ (when $j=8+3k$); in the first case, $37|(k-3)$, so $k=40+37u,\, u\geq 0$, and in the second one, $37|(3k-7)$, so $k=27+37u,\, u\geq 0$. Then our analysis will continue by considering $j\geq 53$ and $j=5k=200+185u, \geq 0,$ or $j=8+3k=89+111u, \geq 0.$ 
The sequence $\left\{x^{(4)}_r:r\geq 0\right\}=\left\{15+11r:r\geq 0\right\}=\left\{15,26,37,48,59,\ldots\right\}$ contains at least a prime number 
included in $\mathcal{A}$, namely  $x^{(4)}_{32}=367$. Consequently, if in the decompositions (\ref{Eq:paraCaso4}) we cannot find a value $r=1,\ldots, 32$ such that $\gcd(15+11r,2j-10-10r)=1$, at least we know that $n$ can be divided by $37$ and $59$, and $367$, so $n=367\cdot t$, with $t\geq 37\cdot 59$. Then, Proposition~\ref{P:formacionP} implies that $n\in \mathcal{A}$. In order to develop rightly this reasoning, for $r=32$ it is necessary that in (\ref{Eq:paraCaso4}) either $2j-10 -10\cdot 32\geq 2\cdot (367)+1$ if $\gcd(367,2j-10-10\cdot 32)=1$ or $2j -10 - 10\cdot 32\geq 1$, otherwise; in both cases, it is sufficient to take $j\geq 533$. Since $j= 200+185u, u\geq 0$, 
or $j= 89+111u, u\geq 0$, the choice $j\geq533$ is appropriate, except for the values $j=89, 200, 311, 385, 422$. For them, the corresponding numbers $n=40+22j$ are 
$1998, 4440, 6882, 8510, 9324$, belonging to $\mathcal{A}$. This ends the case $b=2j$  in $\mathcal{C}_4$ and we stress that $$N_4:=\max\left\{\mathcal{NA}\cap \mathcal{C}_4\right\}= 1140.$$

\subsection{The class $\mathcal{C}_5$} We study the set $\mathcal{NA}$ in $\mathcal{C}_5=\left\{50 + 11k, \, k\geq 0\right\}$. Let $n\in\mathcal{C}_5$, 
$n=50+11b,\, b\geq 0.$
The first values of $\mathcal{C}_5$ are $n=50,$ $ 61,$ $ 72,$ $ 83,$ $ 94,$ $ 105,$ $ 116,$ $ 127,$ $ 138,$ $149,$ $160,$ corresponding to $b=0,1,\ldots,10$ respectively, and it is direct to check that all of them belong to $\mathcal{NA}$. Hence, we start our analysis with $b\geq 11$. Realize that, at the beginning, $a=5$.

$\bullet$ If $b$ is not a multiple of $5$, then $\gcd(5,b)=1$. Additionally, as we have supposed that $b\geq 11$, the condition $b\geq 2a+1$ is also fulfilled, and therefore every number $n=50+11b$, with $b\geq 11$ and $b\neq \dot{5}$, are included in $\mathcal{A}$. 

$\bullet$ From now on, we assume that $b=5j,$ with $j\geq 3$. Being unsuccessful the decomposition $n=10\cdot 5+11\cdot(5j)$ in the search of elements of $\mathcal{A}$, 
we use the new decomposition $n=10\cdot 16 + 11\cdot (5j-10).$

Notice that if $j$ is odd, then $\gcd(16,5j-10)=1$. Also, we need $5j-10\geq 33$ or $j\geq 9$. Then, $n=50+55j\in \mathcal{A}$ for all $j\geq 9$, $j$ odd. We exclude the cases $j=3,5,7$. For them, the corresponding $n$ are $215, 325, 435$, all of them in $\mathcal{NA}$, as can be easily verified. 

Now we assume that $j$ is even, $j\geq 4$, thus $j=2k$ and $b=10k$, $k\geq 2$. We apply the decomposition $n=10\cdot 27 + 11\cdot (5j-20).$ If $3$ is not a divisor of $5j-20$, or equivalently, if $j-4\neq\dot{3}$, then $n$ will be in $\mathcal{A}$ whenever $j\geq 15$. With respect to the remaining even values $j=6,8,12,14$, with $j-4\neq \dot{3}$, we find $n=50+55j\in\{380, 490, 710, 820\}$, all of them in $\mathcal{NA}$, (for instance, $820=10\cdot 5 +11\cdot 70=10\cdot 16 +11\cdot 60=10\cdot 27 +11\cdot 50=10\cdot 38 +11\cdot 40=10\cdot 49 +11\cdot 30=\cdots$).  

Otherwise, assume $j=2k, k\geq 2$, $j-4=\dot{3}$, hence $k=2+3u, \,$ and $j=4+6u, \, u\geq 0$. Then $n=10\cdot 27 + 11\cdot (30u),$ with $u\geq 0$. Next, consider the decomposition $n=10\cdot 49 + 11\cdot (30u-20).$ If $3u-2$ is not a multiple of $7$, we could ensure that $n\in \mathcal{A}$ if $30u-20\geq 99$, that is, if $u\geq 4$. Hence, if $30u-20=5j-40$ is not a multiple of $7$, with $u\geq 4$, then $n$ is in $\mathcal{A}$. In this analysis we have excluded the values $u=0,1,2$ (for which $3u-2$ is not a multiple of $7$), or the corresponding values $n=50 + 55(4+6u)\in\{270,600,930\}$, which are in $\mathcal{NA}$, (for instance, $930=10\cdot 5 + 11\cdot 80=10\cdot 16 + 11\cdot 70=10\cdot 27 + 11\cdot 60=10\cdot 38 + 11\cdot 50=10\cdot 49 + 11\cdot 40=\cdots$). 

If $3u-2$ is a multiple of $7$, with $u\geq 3$, we have $u=3+7\ell,$ with $\ell\geq 0$, and consequently $j=4+6u=22+42\ell$, $\ell\geq 0$. In particular, $j\geq 22$. Notice that  we can write $n=50+55j$ in the following forms:
\begin{equation}\label{Eq:paraCaso5}
n=10\cdot (27+11r) +11(30u-10r), \, r\geq 0.
\end{equation} 

The sequence $\left\{x^{(5)}_r:r\geq 0\right\}=\left\{27+11r:r\geq 0\right\}=\left\{27,38,49,60,\ldots\right\}$ contains at least a prime number 
included in $\mathcal{A}$, namely  $x^{(5)}_{22}=269$. Then, if in the decompositions (\ref{Eq:paraCaso5}) we cannot find a value $r=1,\ldots, 22$ such that $\gcd(27+11r,30u-10r)=1$, at least we know that $n$ can be divided by $3, 7$ and $71$, as well as $269$, so $n=269\cdot t$, with $t\geq 21\cdot 71$. Thus, Proposition~\ref{P:formacionP} implies that $n\in \mathcal{A}$. In order to develop an appropriate reasoning, for $r=22$ it is necessary that in (\ref{Eq:paraCaso5}) either $30u -10\cdot 22\geq 2\cdot (269)+1$ if $\gcd(269,30u-10\cdot 22)=1$ or $30u - 10\cdot 22\geq 1$, otherwise; in both cases, it is sufficient to take $u\geq 26$. Since $u= 3+7\ell,$ this is achieved if we take $\ell\geq 4$. For the remaining cases, $\ell=0,1,2,3$, we have $n= 1260, 3570, 5880, 8190.$  Among them,  $3570, 5880, 8190 \in \mathcal{A}$. On the contrary, $1260\in\mathcal{NA}$, since $1260=10\cdot 5 + 11\cdot 110= 10\cdot 16 + 11\cdot 100=10\cdot 27 + 11\cdot 90=10\cdot 38 + 11\cdot 80=10\cdot 49 + 11\cdot 70=10\cdot 60 + 11\cdot 60=\cdots$.
This ends the case $b=5j$ in $\mathcal{C}_5$. According to our study, we emphasize that    
$$N_5:=\max\left\{\mathcal{NA}\cap \mathcal{C}_5\right\}= 1260.$$

\subsection{The class $\mathcal{C}_6$} We discuss the existence of values of $\mathcal{NA}$ in $\mathcal{C}_6=\left\{60 + 11k, \, k\geq 0\right\},$ and  try to find the maximum value in $\mathcal{NA}\cap \mathcal{C}_6.$ Let $n\in\mathcal{C}_6$, $n=60+11b,\, b\geq 0.$  It is immediate to check that the first values of $\mathcal{C}_6$, for $b=0,1,\ldots,12$ belong to $\mathcal{NA}$. Hence, we start our analysis with $b\geq 13$. Realize that, at the beginning, $a=6$.

$\bullet$ If, simultaneously, $b\neq\dot{2}$ and $b\neq \dot{3}$, then $60+11b\in \mathcal{A}$ whenever $b\geq 2a+1$, and this is guaranteed by our initial hypothesis on $b\geq 13$. 

$\bullet$  Assume that either $b=2j$ for $j\geq	7$, or $b=3j$ for $j\geq 5$. Now, it is necessary to consider another decomposition, say $n=10\cdot 17 + 11\cdot (b-10).$

-- If $b-10\neq \dot{17},$ we have $n\in \mathcal{A}$ once we check that the condition $b-10\geq 2\cdot 17 +1$ is satisfied. This holds for $b\geq 45$.  

(i) In this case, if in turn $b=2j$ for $j\geq 7$, then we have $n \in \mathcal{A}$ for $j\geq 23$; and, being $2j-10\neq\dot{17}$, the excluded cases in our analysis are $7\leq j\leq 21$. For them, the corresponding values of $n=10\cdot 17 + 11\cdot (2j-10)$ are $214, 236, 258, 280, 302, 324, 346, 368, 390, 412, 434, 456, 478, 500, 522.$ None of them is realized as an element of $\mathcal{A}$, all of them belong to $\mathcal{NA}$, (for instance,$522=10\cdot 6 + 11\cdot 42=10\cdot 17 + 11\cdot 32=\cdots$, here $32<2\cdot 17 +1$). 
Notice that, even, for $j=22$ we obtain $n=544\in\mathcal{NA}$.  

(ii) If, additionally, $b=3j$, with $j\geq 5$, then $n \in \mathcal{A}$ for $j\geq 15$ (notice $b\geq 45$); and imposing the restriction $3j-10\neq\dot{17}$, the excluded cases now are $j=5,6,7,8,10,11,12,13,14$, and its associate values $n=10\cdot 17 + 11\cdot (3j-10)$ are $225, 258, 291, 324, 390, 423, 456, 489, 522$ which are included in $\mathcal{NA}$. 

-- From now on, we consider that $b-10=\dot{17}$. We have two subcases: either $b=2j$, $j\geq 7$, or $b=3j$ with $j\geq 5$. In the first one, we have 
$2j-10=\dot{17}$ if and only if $j-5=\dot{17}$, therefore $j=22+17u, u\geq 0$; in the second case, we find $3j-10=\dot{17}$ if and only if 
$j=9+17u, u\geq 0$. Notice that  we can write $n=60+11b$ in the following forms:
\begin{equation}\label{Eq:paraCaso6}
n=10\cdot (6+11r) +11(b-10r), \, r\geq 0.
\end{equation} 

The sequence $\left\{x^{(6)}_r:r\geq 0\right\}=\left\{6+11r:r\geq 0\right\}=\left\{6,17,28,39,50, \ldots\right\}$ contains at least a prime number 
included in $\mathcal{A}$, namely  $x^{(6)}_{17}=193$. So, if in (\ref{Eq:paraCaso6}) we cannot find a value $r=1,\ldots, 17$ such that $\gcd(6+11r,b-10r)=1$, at least we know that $n$ can be divided by $2$ or $3$, by $17$ and $61$, as well as $193$, so $n=193\cdot t$, with $t\geq 34\cdot 61$. Then, Proposition~\ref{P:formacionP} implies that $n\in \mathcal{A}$. For $r=17$ it is necessary that in (\ref{Eq:paraCaso6}) either $b -10\cdot 17\geq 2\cdot (193)+1$ if $\gcd(193,b-10\cdot 17)=1$ or $b - 10\cdot 17\geq 1$, otherwise; in both cases, it is sufficient to take $b\geq 557$. 

According to the subcases above mentioned: (a) If $j=22+17u, u\geq 0$, then $b=2j=44+34u$,  and $b\geq 557$ for $u\geq 16$; consequently, $n=544+374u\in \mathcal{A}$. For the remaining cases $0\leq u\leq 15$ we obtain the following values of $n$, $ 544,$ $918,$ $1292,$ $1666,$ $2040,$ $2414,$ $2788,$ $3162,$ $3536,$ $3910,$ $4284,$ $4658,$ $5032,$ $5406,$ $5780,$ $6154.$ It is easy to check that $544, 918\in\mathcal{NA}$, and with the help of an algorithmic routine implemented in our personal computer, it can be checked that the rest of values can be decomposed in the form $10a'+11b'$, where the pairs $[a',b']$ are given respectively by 
\begin{eqnarray*}
& & [39,82], [39,116], [61,130], [39,184], [39,218], [61,232], [61,266],\\ 
& & [39, 320],  [61,334], [39,388], [39,422], [61,436], [39,490], [39,524].
\end{eqnarray*}
 (b) If $j=9+17u, u\geq 0$, then $b=3j=27+51u$,  and $b\geq 557$ for $u\geq 11$; in this case, 
$n=357+561u\in \mathcal{A}$. For $0\leq u\leq 10$ we obtain the following values of $n$, $357, 918, 1479, 2040, 2601, 3162, 3723, 4284, 4845, 5406, 5967;$ we find that $357, 918\in\mathcal{NA}$, whereas the rest of values belong to $\mathcal{A}$ and its respective pairs $[a', b']$ in the decomposition $10a'+11b'$ are given by $$ [28,109],[61,130],[28,211],[61,232],[28,313],  [61,334],[28,415],[61,436],[28,517].$$
This completes the study of $\mathcal{NA}\cap\mathcal{C}_6$. We have found that $$N_6:=\max\left\{\mathcal{NA}\cap \mathcal{C}_6\right\}= 918.$$ 

\subsection{The class $\mathcal{C}_7$} We discuss the existence of values of $\mathcal{NA}$ in $\mathcal{C}_7=\left\{70 + 11k, \, k\geq 0\right\},$ and try to find the maximum in $\mathcal{NA}\cap \mathcal{C}_7.$ Let $n=70+11b,\, b\geq 0.$
Firstly, we take $a=7$. It is necessary to consider $b\geq 15$ so $b\geq 2a+1$ is fulfilled. 
Thus, the first values of $\mathcal{C}_7$, for $b\leq14$ are $n=70,$ $81,$ $92,$ $103,$ $114,$ $125,$ $136,$ $147,$ $158,$ $169,$ $180,$ $191,$ $202,$ $213,$ $224,$ which are in $\mathcal{NA}$. Hence, we start our analysis with $b\geq 15$.

$\bullet$ If $b$ is not a multiple of $7$, then $\gcd(7,b)=1$ and $n=70+11b\in \mathcal{A}$.

$\bullet$ Assume that $b=7j$, with $j\geq 3$. We have to use the decomposition $n=10\cdot 18 + 11\cdot (7j-10)$, whenever $7j-10\geq 37,$ or $j\geq 7$. For the cases $j=3,\ldots,6$ we find the values $n=301, 378, 455, 532$, all of them 
being elements of $\mathcal{NA}$. From this point, we take $j\geq 7$. 

-- If $7j-10$, $j\geq 7$, is not multiple of $2$ nor $3$, we have that $n=180+11(7j-18)\in \mathcal{A}$ since $\gcd(18,7j-10)=1$ and 
 $7j-10\geq 37$. 

-- Instead, if $7j-10$ is either a multiple of $2$ or $3$, the condition on the greatest common divisor is not satisfied. Take into account that:
(a) if $7j-10=\dot{2}, j\geq 7$, then $j=8+2u, u\geq 0;$ (b) if $7j-10=\dot{3}, j\geq 7$, we find $j=7+3u, u\geq 0$. In the next step, we consider $n=10\cdot 29 + 11\cdot (7j-20)$, which provides numbers in $\mathcal{A}$ if $7j-20\geq 59,$ or $j\geq 12$. For $j=7,\ldots,11$ we only need to check the values $j=7,8,10$ due to (a) and (b), for which the corresponding values are $609, 686, 840$, which belong to $\mathcal{NA}$.

If, apart from the above requisites on $j$, we suppose that $7j-20$ is not a multiple of $29$, we obtain that $n=10\cdot 29 + 11\cdot (7j-20)\in \mathcal{A}$  for all $j\geq 12$. 
If, on the contrary, we assume that $7j-20=\dot{29}$, the above restrictions (a), (b) imply that either: (i) $7\cdot (8+2u)-20=\dot{29}$, that is, 
$18+7u=\dot{29}$, so $u=14+29\ell,\,  \ell\geq 0$; or (ii) $7(7+3u)-20=\dot{29}$, that is, $u=29\ell,\,\ell\geq 0.$ As usual, we write $n$ as follows:
\begin{equation}\label{Eq:paraCaso7}
n=10\cdot (18+11r) +11(7j-10-10r), \, r\geq 0.
\end{equation} 

The sequence $\left\{x^{(7)}_r:r\geq 0\right\}=\left\{18+11r:r\geq 0\right\}=\left\{18,29,40,51, \ldots\right\}$ exhibits prime numbers which are in $\mathcal{A}$, for instance  $x^{(7)}_{11}=139$. In this way, if in (\ref{Eq:paraCaso7}) we cannot find a value $r=1,\ldots, 11$ such that $\gcd(18+11r,7j-10-10r)=1$, at least we know that $n$ can be divided by $7$, $2$ or $3$, by $29$, by $73$, as well as $139$, so $n=139\cdot t$, with $t\geq 14\cdot 29$. Then, Proposition~\ref{P:formacionP} implies that $n\in \mathcal{A}$. In order to our reasoning makes sense, for $r=11$ we need that in (\ref{Eq:paraCaso7}) either $7j-10 -10\cdot 11\geq 2\cdot (139)+1$ if $\gcd(139,7j-10-10\cdot 11)=1$ or $7j-10 - 10\cdot 11\geq 1$, otherwise; in both cases, it is sufficient to take $j\geq 57$. Finally: (i) if $j=8+2u=36+58\ell$, $\ell\geq0$, it suffices to take $\ell\geq 1$ to ensure that the value $n$ is in $\mathcal{A}$; the unique exception appears for $\ell=0$ or $j=36$, and for this value we obtain $n=2842$, a new number in $\mathcal{A}$; (ii) if $j=7+3u=7+87\ell$, $\ell\geq 0$, we take again $\ell\geq 1$ to guarantee $j\geq 57$; in this case, the exception is given by $\ell=0$, or $j=7$, and for this value we obtain $n=609$, which is not in $\mathcal{A}$. 

This finishes the study of the set $\mathcal{NA}\cap \mathcal{C}_7$. We have obtained $$N_7:=\max\left\{\mathcal{NA}\cap \mathcal{C}_7\right\}= 840.$$ 

\subsection{The class $\mathcal{C}_8$} We pass to discuss the existence of values of $\mathcal{NA}$ in $\mathcal{C}_8=\left\{80 + 11k, \, k\geq 0\right\},$ as well as to try to find the maximum value in  
$\mathcal{NA}\cap \mathcal{C}_8.$ Let $n\in\mathcal{C}_8$, 
$n=80+11b,\, b\geq 0.$
To start, we take $a=8$. To get the necessary condition $b\geq 2a+1$, we must consider $b\geq 17$. 
Therefore, the first values of $\mathcal{C}_8$, for $b=0,1,\ldots,16$ are elements of $\mathcal{NA}$, namely $n=80,$ $91,$ $102,$ $113,$ $124,$ $135,$ $146,$ $157,$ $168,$ $179,$ $190,$ $201,$ $212,$ $223,$ $234,$ $245,$ $256.$ Hence, we start our analysis with $b\geq 17$.

$\bullet$ If $b$ is odd, then automatically $n=80+11b\in \mathcal{A}$ for all $b\geq 17$.  

$\bullet$ Assume that $b$ is even, $b=2j,\, j\geq 9$. Consider the decomposition
\begin{equation} \label{Eq:C8}
n=10\cdot 19 + 11\cdot (2j-10). 
\end{equation}
If we apply $b'\geq 2a'+1$, with $a'=19$ and $b'=2j-10$, it must be $j\geq 25.$ Thus, for the remaining values of, $9\leq j\leq 24$, we obtain that the respective values of $n$ are $278,$ $300,$ $322,$ $344,$ $366,$ $388,$ $410,$ $432,$ $454,$ $476,$ $498,$ $520,$ $542,$ $564,$ $586,$ $608,$ and all of them 
are in $\mathcal{NA}$. Consequently, from now on we assume that $b=2j$ with $j\geq 25$. From the inspection of (\ref{Eq:C8}), we deduce that if $2j-10$ is not a multiple of $19$, then we assure that $n\in \mathcal{A}$ for all $j\geq 25$. 
However, if $2j-10=\dot{19}$, that is, $j=24+19u, \, u\geq 1$, we have to continue with the process of looking for a suitable decomposition 
$10a'+11b'$. For instance, 
\begin{equation}\label{Eq:paraCaso8}
n=10\cdot (19+11r) +11\cdot (2j-10-10r), \, r\geq 0.
\end{equation} 

The sequence $\left\{x^{(8)}_r:r\geq 0\right\}=\left\{19+11r:r\geq 0\right\}=\left\{19,30,41,52, \ldots\right\}$ includes prime numbers which are in $\mathcal{A}$, for instance  $x^{(8)}_{8}=107$. In this way, if in the decompositions (\ref{Eq:paraCaso8}) we are not able to find a value $r=1,\ldots, 8$ such that $\gcd(19+11r,2j-10-10r)=1$, at least we know that $n$ can be divided by $2$, $19$,   as well as $107$, so $n=107\cdot t$, with $t\geq 38$. Then, by Proposition~\ref{P:formacionP} we know that $n\in \mathcal{A}$. In order to sustain our argument, for $r=8$ it is necessary that in (\ref{Eq:paraCaso8}) either $2j-10 -10\cdot 8\geq 2\cdot (107)+1$ if $\gcd(107,2j-10-10\cdot 8)=1$ or $2j-10 - 10\cdot 8\geq 1$, otherwise; in both cases, it suffices to take $j\geq 153$. Taking into account that $j=24+19u,$ $u\geq 1$, the inequality $j\geq 153$ is reached when $u\geq 7$, and then we can establish that $n=10\cdot 19 +11(2j-10)$ is in $\mathcal{A}$; the unique exceptions appear for $1\leq u\leq 6$, and for them 
we have $n=1026,1444,1862,2280,2698,3116,3534;$ in this list, 
$1026\in\mathcal{NA}$, since $1026=10\cdot 8+ 11\cdot 86=10\cdot 19+ 11\cdot 76=10\cdot 30+ 11\cdot 66=10\cdot 41+ 11\cdot 56=\cdots$, 
whereas the remaining numbers are elements of $\mathcal{A}$ as it can be easily verified. 
\begin{eqnarray*}
& & [41,56],[41,94],[41,132],[41,170],[41,208],\\
& & [63,226],[41,284],[41,322],[41,360].
\end{eqnarray*}  
Thus, we have finished the study of  case $b=2j$ corresponding to the set $\mathcal{NA}\cap \mathcal{C}_8$. Then $$N_8:=\max\left\{\mathcal{NA}\cap \mathcal{C}_8\right\}= 1026.$$ 

\subsection{The class $\mathcal{C}_9$} We consider the set  
$\mathcal{C}_9=\left\{90 + 11k, \, k\geq 0\right\},$ and try to guess the maximum value in $\mathcal{NA}\cap \mathcal{C}_9.$ Let $n\in\mathcal{C}_9$, $n=90+11b,\, b\geq 0.$
To start, we take $a=9$. To get the necessary condition $b\geq 2a+1$, we must consider $b\geq 19$. 
Therefore, the first values of $\mathcal{C}_9$, for $b=0,1,\ldots,18$ are elements of $\mathcal{NA}$, given by $ n = 90,$ $101,$ $112,$ $123,$ $134,$ $145,$ $156,$ $167,$ $178,$ $189,$ $200,$ $211,$ $222,$ $233,$ $244,$ $255,$ $266,$ $277,$ $288.$
Hence, we start our analysis with $b\geq 19$.

$\bullet$ If  $b \neq \dot{3}$, then directly $n=10\cdot 9 + 11\cdot b\in \mathcal{A}$ for all $b\geq 19$. 

$\bullet$ Assume that $b=3j$, with $j\geq 7$ and consider $n=10\cdot 31 + 11\cdot (3j-20).$

-- If, additionally, $3j-20$ is not a multiple of $31$, then $n$ is in $\mathcal{A}$ if $3j-20\geq 63$ or $j\geq 28$. So, we must analyze the exceptions $7\leq j\leq 28$. For these values, we find that $321,$ $354,$ $387,$ $420,$ $453,$ $486,$ $519,$ $552,$ $585,$ $618,$ $684,$ $750,$ $816,$ $882,$ $915,$ $948$ belong to $\mathcal{NA}$, and on the other hand $\{651,$ $717,$ $783,$ $849,$ $981,$ $1014\}\subset \mathcal{A}$, with associate pairs $[a',b']$ given respectively by $[20,47],[20,53],[20,59],[20,71],$ $[31,64].$
 
-- If $3j-20=\dot{31}$, $j\geq 7$, then $j=17+31u,\, u\geq 0.$ In this point, we consider the general formulation of the decompositions in $\mathcal{C}_9$, 
  \begin{equation}\label{Eq:paraCaso9}
n=10\cdot (9+11r) +11\cdot (3j-10r), \, r\geq 0.
\end{equation} 

The sequence $\left\{x^{(9)}_r:r\geq 0\right\}=\left\{9+11r:r\geq 0\right\}=\left\{9,20,31,42,53,\ldots\right\}$ includes prime numbers which are in $\mathcal{A}$, for instance  $x^{(9)}_{8}=97$. In this way, if in the decompositions (\ref{Eq:paraCaso9}) we are not able to find a value $r=1,\ldots, 8$ such that $\gcd(9+11r,3j-10r)=1$, at least we know that $n$ can be divided by $3$, $31$, as well as $97$, so $n=97\cdot t$, with $t\geq 93$. Then, Proposition~\ref{P:formacionP} gives $n\in \mathcal{A}$. For $r=8$ it is necessary that in (\ref{Eq:paraCaso9}) either $3j-10\cdot 8\geq 2\cdot (97)+1$ if $\gcd(97,3j-10\cdot 8)=1$ or $3j- 10\cdot 8\geq 1$, otherwise; in both cases, it suffices to take $j\geq 92$. Taking into account that $j=17+31u,$ $u\geq 0$, the inequality $j\geq 92$ is true when $u\geq 3$. In this case, the numbers $n$ will be in $\mathcal{A}$. For $u=0,1,2$, we have $n=651, 1674, 2697$, being $651$ and $2697$ in $\mathcal{A}$. Finally, $1674$ provides us the maximum of $\mathcal{NA}\cap\mathcal{C}_9$, (notice that $1674=10\cdot 9+11\cdot 144 = 10\cdot 20+11\cdot 134 =10\cdot 31+11\cdot 124 =10\cdot 42+11\cdot 114=10\cdot 53+11\cdot 104 =\cdots,$ and realize that $104<2\cdot 53 +1$. 
This concludes the part for a multiple of $3$, $b=3j$, and ends the case. We have found that $$N_9:=\max\left\{\mathcal{NA}\cap \mathcal{C}_9\right\}= 1674.$$ 

\subsection{The class $\mathcal{C}_{10}$} We consider the set $\mathcal{C}_{10}=\left\{100 + 11k, \, k\geq 0\right\},$ and try to find the maximum of $\mathcal{NA}\cap \mathcal{C}_{10}.$ Let $n\in\mathcal{C}_{10}$, $n=100+11b,\, b\geq 0.$
To start, we take $a=10$. To get the necessary condition $b\geq 2a+1$, we must consider $b\geq 21$. 
Thus, the first values of $\mathcal{C}_{10}$, for $b=0,\ldots,20$ are elements of $\mathcal{NA}$, given by $ n=100,$ $111,$ $122,$ $133,$ $144,$ $155,$ $166,$ $177,$ $188,$ $199,$ $210,$ $221,$ $232,$ $243,$ $254,$ $265,$ $276,$ $287,$ $298,$ $309,$ $320.$
We start our analysis with $b\geq 21$. Write $n=10\cdot 32 + 11\cdot (b-20).$

$\bullet$ If $b$ is odd, then $\gcd(32,b-20)=1$ and we can establish that $n\in \mathcal{A}$ if the condition $b'\geq 2a'+1$ 
is satisfied, where $a'=32, b'=b-20$. This supposes to take $b\geq 85$, $b$ odd. With the help of a computer, we find out whether the 
excluded values $b=21,\ldots, 83$ originate elements of $\mathcal{A}$ or elements of $\mathcal{NA}$. For these values we obtain $\{375, 485, 595, 705\}\subset \mathcal{NA}$ and the rest of the elements are in $\mathcal{A}$, whose respective pairs $[a',b']$ are given by
{\small{\begin{eqnarray*}
& [10,21], [10,23], [10,27], [10,29], [10,31], [10,33], [10,37], [10,39], [10,41], [10,43],\\
& [10,47], [10,49], [10,51], [10,53], [10,57],[10,59], [10,61], [10,63], [21,55], [10,67],\\
& [10,69],[10,71],[10,73],[21,65],[10,77],[10,79],[10,81],[10,83].
\end{eqnarray*}}}

$\bullet$ If $b$ is even, $b\geq 22$, or $b=2j$ with $j\geq 11$. Similarly to the odd case, we suppose that $b\geq 86$ and consider alone the exceptions $b=22+2\cdot s, 0\leq s\leq 31.$ From the corresponding values of $n=10\cdot 32 + 11\cdot (b-20)$, only $\{694, 716, 760, 782, 848, 892, 914, 958, 1024\}\subset \mathcal{A}$, with respective pairs $[a', b']$ given by 
$$[21,44], [21,46], [21,50], [21,52], [21,58], [21,62], [21,64], [21,68], [21,74],$$ and with the rest of the elements included in $\mathcal{NA}$ (by way of example, $1002=10\cdot 10 + 11\cdot 82= 10\cdot 21 + 11\cdot 72=10\cdot 32 + 11\cdot 62=10\cdot 43 + 11\cdot 52=\cdots$). 

From this point, we are restricted to the case $b=2j\geq 86$, or  $j\geq 43$. Next, we consider the combination $n=10\cdot 43 + 11\cdot (b-30).$
If $b-30=2j-30\neq\dot{43}$, then $\gcd(43,b-30)=1$ and we finish if we guarantee that $2j-30\geq 87$, or $j\geq 58$. This means 
that we have to study separately the values $43\leq j\leq 57$. For them, we find $\{1068, 1134, 1200, 1266, 1288, 1332\}\subset\mathcal{NA}$ (in this direction, let us show the decompositions of $1332$: $10\cdot 10 + 11\cdot 112=10\cdot 21 + 11\cdot 102= 10\cdot 32 + 11\cdot 92= 10\cdot 43 + 11\cdot 82= 10\cdot 54 + 11\cdot 72=\cdots$), 
and the rest are elements of $\mathcal{A}$ with the following respective pairs $[a', b']$:
\begin{eqnarray*}
[21,76], [21,80],[21,82], [21,86],[21,88], [21,92],[21,94],  [21,100], [21,104].
\end{eqnarray*}
 If $b-30=2j-30=\dot{43}$, then $j-15=\dot{43}$, so $j=58+43u,\, u\geq 0$. In order to assign a combination $10a'+11b'$ to $n$, we need to continue searching for new values $a', b'$. Then, we consider 
\begin{equation}\label{Eq:paraCaso10}
n=10\cdot (10+11r) +11\cdot (b-10r), \, r\geq 0.
\end{equation} 

The sequence $ \left\{x^{(10)}_r:r\geq 0\right\}=\left\{10+11r:r\geq 0\right\}$ contains several prime numbers which are in $\mathcal{A}$. We fix our attention in $x^{(10)}_{9}=109$. In this way, if in the decompositions (\ref{Eq:paraCaso10}) we are not able to find a value $r=1,\ldots, 9$ such that $\gcd(10+11r,2j-10r)=1$ for the number $n=10\cdot 10 +11\cdot(2j),$ with $j\geq 58$, $j=58+43u, u\geq0$, at least we know that $n$ can be divided by $2$ and $43$, as well as $109$, so $n=109\cdot t$, with $t\geq 86$. Then, Proposition~\ref{P:formacionP} gives $n\in \mathcal{A}$. In order to well guiding our reasoning, for $r=9$ it is necessary that in (\ref{Eq:paraCaso10}) either $2j-10\cdot 9\geq 2\cdot (109)+1$ if $\gcd(109,2j-10\cdot 9)=1$ or 
$2j- 10\cdot 9\geq 1$, otherwise; in both cases, it suffices to take $j\geq 155$. Bearing in mind that $j=58+43u,$ $u\geq 0$, the inequality $j\geq 155$ is valid when $u\geq 3$. In this case, the numbers $n$ will be in $\mathcal{A}$. For the remaining values of $u$, $u\in\{0,1,2\}$, we obtain $n\in\{1376, 2322, 3268\}$, which present the following respective decompositions $10\cdot 21+11\cdot 106,$ $10\cdot 65 +11\cdot 152$, and $10\cdot 21+11\cdot 278$, thus they
are in $\mathcal{A}$. This ends the discussion on the even numbers $b=2j$, and closes the case. We have found that $$N_{10}:=\max\left\{\mathcal{NA}\cap \mathcal{C}_{10}\right\}= 1332.$$

\subsection{The final bound. A table of the elements of $\mathcal{A}$} 
By collecting all the study developed for the sets $\mathcal{C}_m, 1\leq m\leq 10$, if we denote by $N_m$ the maximum value in 
$\mathcal{NA}\cap \mathcal{C}_m$, $1\leq m\leq 10$, we find:

\begin{tabular}{ccccc}
$N_1= 32,$ & $N_2=1560,$ & $N_3=1350,$ & $N_4=1140,$ &  $N_5=1260,$\\
$N_6=918,$ & $N_7=840,$ & $N_8=1026,$ & $N_9=1674,$ & $N_{10}=1332.$
\end{tabular}

On the other hand, the maximum value of $\mathcal{NA}$ being a multiple of $11$ is $N_{11}=1320$ as Proposition~\ref{P:para11} and Corollary~\ref{C:cota11} show. Therefore, $$M=\max\{\mathcal{NA}\}=\max\{N_m: 1\leq m \leq 11 \}=\boxed{1674}.$$ Once we know the maximum of $\mathcal{NA}$, with the help of a mathematical software and a few of patient we obtain all the elements in $\mathcal{A}$, which we gather in the following table:

\begin{center} \label{table}
\resizebox{12cm}{!} {
\begin{tabular}{c|c}
\hline Intervals & Numbers in $\mathcal{A}$ \\ \hline
 $n\in[1,100]$ & $1,8,11,43,54, 65, 75, 76, 87, 97, 98$ \\ \hline 
 $n\in[101,200]$ & $107, 109, 118, 119, 120, 131, 139, 140, 141, 142, 151, 153,$  \\ 
   $ $   & $161, 163, 164, 171, 173, 175, 182, 183, 184, 185, 186, 193, 197$  \\
 \hline
$n\in[201,300]$ &  $203, 204, 205, 206, 207, 208, 217, 219, 226, 227, 229, 230, 235, 237, 239,$\\
$ $ & $241, 246, 247, 248, 249, 250, 251, 252, 257, 259, 263, 267, 268, 269, 271,$ \\
$ $ & $ 272, 273, 274, 279, 281, 283, 285, 289, 290, 292, 293, 295, 296, 299$\\
\hline
$n\in[301,400]$ & $303, 305, 307, 311, 312, 313, 314, 315, 316, 317, 318, 323, 329, 331, 332, $\\
 $ $ & $333, 334, 335, 336, 337, 338, 339, 340, 343, 345, 347, 349, 351, 353, 355, $\\
 $ $ & $356, 358, 359, 361, 362, 363, 365, 367, 369, 371, 373, 374, 376, 377, 379, $\\
 $ $ & $381, 382, 383, 384, 385, 389, 391, 395, 396, 397, 398, 399, 400$\\
\hline 
$n\in[401,500]$ & $[401,500] \setminus \{408, 410, 412, 414, 416, 420, 423, 426, 430, $\\
 $ $ & $432, 434, 435, 436, 452, 453, 454, 455, 456, 458, 473, $\\
 $ $ & $474, 476, 478, 480, 485, 486, 490, 492, 496, 498, 500 \}$\\
\hline 
$n\in[501,600]$ & $[501,600] \setminus\{518,519,520,522,525,532,540,542,544,546,$ \\
 & $552,558,562,564,584,585,586,590,594,595,600 \}$ \\
\hline 
$n\in[601,700]$ & $[601,700]\setminus\{606,608,609,612,618,624,$ \\ & $628,650,672,678,684,686,690,700\}$\\
\hline 
$n\in[701,800]$ & $[701,800]\setminus\{702, 705, 710, 738, 744, 750, 756\}$\\
\hline 
$n\in[801,900]$ & $[801,900]\setminus\{804, 810, 820, 826, 836, 840, 870, 876, 882\}$\\
\hline 
$n\in[901,1000]$ & $[901,1000]\setminus\{915, 918, 920, 930, 936, 942, 948, 980, 988\}$\\
\hline 
$n\in[1001,1100]$ & $[1001,1100]\setminus\{1002, 1008, 1020, 1026, 1030, 1068, 1074\}$\\
\hline 
$n\in[1101,1300]$ & $[1101,1300]\setminus\{1134, 1140, 1200,1260, 1266, 1274, 1288\}$\\
\hline 
$n\in[1301,1700]$ & $[1301,1700]\setminus\{1320, 1332, 1350,1560, 1674\}$\\
\hline 
$n > 1674$ & All the values\\ \hline
\end{tabular}
}
\end{center}

\begin{remark}
After Table \ref{table} was done by performing a routine program in a standard mathematical software, Professor Pedro A. García Sánchez commented us the existence of an algebraic platform of free access, GAP (see \cite{GAP4}, a system for computational discrete algebra), and the corresponding package for calculating the numbers of $\mathcal{A}$ (see \cite{Delgado}); for instance, the reader interested in algorithm procedures can compute them by implementing the following instructions: 
\end{remark}
\vspace{-2mm}
{\small{
\noindent \texttt{s:=NumericalSemigroup(10,11);}
\newline \texttt{belong:=x->ForAny(Factorizations(x,s),p->p[2]>=2*p[1]+1 and Gcd(p)=1);} 
\newline \texttt{Filtered(Intersection(s,[0..2000]),belong);}}} $\hfill\Box$

\section{Conclusion}

To sum up we have proved that the set $$\mathcal{A} = \left\{10\cdot a + 11\cdot b \ | \gcd(a,b)=1, a\geq 1, b\geq 2a+1 \right\}$$ \noindent is unbounded. Moreover, we have found the greatest number that does not belong to $\mathcal{A}$, that is, $M= 1674$ and we have determined in detail all the element of $\mathcal{A}$, which are collected in Table \ref{table}.

As far as we know, this problem has not been previously studied in the literature. Observe that our result is a particular case from the analysis of a certain difference equation of order $k=4$, Equation (\ref{Eq:G4}). Obviously, such equation can be extended to an arbitrary order $k \geq 5$. In \cite{Linero}, the authors propose a conjecture establishing a relationship between the general set of periods and the semigroup generated by numbers $3k-1$ and $3k-2$; the exact knowledge of such set of periods perhaps would need some additional conditions to the elements of the semigroup. In this sense, it is an open problem to determine the extra conditions that should be considered and, consequently, to determine the gaps which those conditions generate in the semigroup and to analyse the boundedness character of these gaps. To this regard, the authors are aware of the existence of certain additional conditions, to wit, $1\geq a \geq 10$, that yield to the unboundedness character of the complementary, since it can be proved that there exists a strictly increasing sequence of positive numbers $(n_j)_j$ such that $2^{n_j} \notin \mathcal{A}$. 

Therefore, some general interesting questions for algebrists would be: the study of the boundedness character of the complementary of semigroups generated by two coprimes numbers, $p$ and $q$, whose elements are originated by combinations $a\cdot p + b \cdot q$ being $a,b $ natural integers satisfying $\gcd(a,b)=1$, as well as to find in the bounded case a formula for the biggest positive integer not representable with the assumptions of being coprimes $a,b$; to extend the previous problem to the case where $p,q$ are not necessarily coprime; to analyse the problem of adding extra conditions to the numbers $a,b$ apart from the fact of being coprime, as for example, linear inequalities. 

\section*{Acknowledgements}

We sincerely thank Prof. Pedro A. García Sánchez from University of Granada, Spain, and Prof. Christopher O'Neill from San Diego Statal University, CA, USA, for providing us some useful comments and to inform us about the existence of GAP system. 

This paper has been partially supported by Grant MTM2017-84079-P (AEI/FEDER,UE), Ministerio de Ciencias, Innovación y Universidades, Spain.


\begin{thebibliography}{99}

\bibitem{Alfonsin} J. L. Ramírez Alfonsín, \emph{The diophantine Frobenius problem}, Oxford Lecture Series in Mathematics and its Applications \textbf{30}, Oxford University Press, Oxford, 2005.

\bibitem{Delgado} M. Delgado, P.A. García-Sánchez, J. Morais, \emph{NumericalSgps - a GAP package}, v. 1.2.2, 03/03/2020, \url{https://www.gap-system.org/Packages/numericalsgps.html}.

\bibitem{Linero} A. Linero Bas, D. Nieves Roldán, \emph{Periods of a max-type equation}, preprint.

\bibitem{GAP4}
  The GAP~Group, \emph{GAP -- Groups, Algorithms, and Programming, 
  Version 4.11.1}; 
  2021,
  \url{https://www.gap-system.org}.

\end{thebibliography}
\end{document}